		\numberwithin{equation}{section}
\theoremstyle{plain} 
\newtheorem{theorem}{\indent\sc Theorem}[section]
\newtheorem{lemma}[theorem]{\indent\sc Lemma}
\newtheorem{corollary}[theorem]{\indent\sc Corollary}
\theoremstyle{definition} 
\newtheorem{example}[theorem]{\indent\sc Example}
\title[A Liouville-type theorem for the $p$-Laplacian]{\sc A Liouville-type theorem for the $p$-Laplacian on complete non-compact Riemannian manifolds}
\author[F.R. dos Santos and M.N. Soares]{F\'abio R. dos Santos$^{\ast}$ and Matheus N. Soares}
\address{
Departamento de Matem\'atica \\
Universidade Federal de Pernambuco \\
50.740-540 Recife, Pernambuco \\
Brazil}
\email{fabio.reis@ufpe.br}
\email{matheus.nsoares@ufpe.br}
\keywords{Complete non-compact manifolds, first eigenvalue, p-Laplacian, Liouville-type result, warped product manifolds}
\subjclass[2020]{Primary 53C42; Secondary 53A10, 53C20.}
\thanks{$^{\ast}$Corresponding author}
\begin{document}
			
\begin{abstract}
A Liouville-type result for the $p$-Laplacian on complete Riemannian manifolds is proved. As an application are present some results concerning complete non-compact hypersurfaces immersed in a suitable warped product manifold.
\end{abstract}
			
\maketitle
		
\section{Introduction}

Let $(\Sigma^{n},g)$ be a Riemannian manifold. For any $1<p<\infty$ and any function $u\in W^{1,p}_{loc}(\Sigma)$, the $p$-Laplacian is the differential operator defined by,
\begin{equation}\label{eq:1.1}
\Delta_{p}u={\rm div}(\mid\nabla u\mid^{p-2}\nabla u).
\end{equation}
The $p$-Laplacian appears naturally on the variational problems associated to the energy functional $E_{p}:W^{1,p}_{0}(\Sigma)\to\mathbb{R}$ given by,
\begin{equation}\label{eq:1.2}
E_{p}(u)=\int_{M}\mid\nabla u\mid^{p}d\Sigma,
\end{equation}
where $d\Sigma$ denotes the element volume of $\Sigma^{n}$. In particular, if $p=2$, the $p$-Laplacian $\Delta_{p}$ reduces to the usual Laplace operator $\Delta_{2}=\Delta$. We denote by $\lambda_{1,p}(\Sigma)$ the first eigenvalue of $\Sigma^{n}$ which is defined by,
\begin{equation}\label{eq:1.3}
\lambda_{1,p}(\Sigma)=\inf\left\{\dfrac{\int_{\Sigma}\mid\nabla u\mid^{p}d\Sigma}{\int_{\Sigma}\mid u\mid^{p}d\Sigma};u\in W^{1,p}_{0}(\Sigma)\backslash\{0\}\right\}.
\end{equation}

Now, let us take $\Sigma^{n}$ an $n$-dimensional complete non-compact manifold and, let $\{\Omega_{k}\}_{k\in\mathbb{N}}$ be an exhaustion of $\Sigma^{n}$ by compact domains, that is, $\{\Omega_{k}\}_{k\in\mathbb{N}}$ are compact domains such that $\Sigma^{n}=\cup_{k=1}^{\infty}\Omega_{k}$ and $\Omega_{k}\subset\Omega_{k+1}$, for all $k\in\mathbb{N}$. We will consider the first eigenvalue $\lambda_{1,p}(\Omega_{k})$ of the following Dirichlet boundary value problem:
\begin{equation}\label{eq:1.4}
\left\{
\begin{array}{ccccc}
\Delta_{p}u &=& -\lambda\mid u\mid^{p-2}u, &\quad\mbox{in}\quad \Omega_{k}   \\
u &=& 0, &\quad\mbox{on}\quad \partial\Omega_{k}.
\end{array}
\right.
\end{equation}
The existence of the eigenvalue problem~\eqref{eq:1.4} and the variational characterization as in~\eqref{eq:1.3} were proved by Veron~\cite{Veron:91}. On the other hand, Lindqvist~\cite{Lindqvist:12} proved that $\lambda_{1,p}(\Omega_{k})$ is simple for each compact domain $\Omega_{k}$, $k\in\mathbb{N}$. By definition, we see that, $\lambda_{1,p}(\Sigma)=\lambda_{1,p}(\Omega_{k})$ for each compact domain $\Omega_{k}$, $k\in\mathbb{N}$. Using the domain monotonicity of $\lambda_{1,p}(\Omega_{k})$, we deduce that $\lambda_{1,p}(\Omega_{k})$ is non-increasing in $k\in\mathbb{N}$ and has a limit which is independent of the choice of the exhaustion of $\Sigma^{n}$. Therefore,
\begin{equation}\label{eq:1.5}
\lambda_{1,p}(\Sigma)=\lim_{k\to\infty}\lambda_{1,p}(\Omega_{k}).
\end{equation}

On the other hand, strongly $p$-subharmonic functions play an important role in the study of Riemannian manifolds. Let us recall that, a smooth function $u:\Sigma^{n}\to\mathbb{R}$ is said to be {\em strongly $p$-subharmonic} if $u$ satisfies the following differential inequality:
\begin{equation}\label{eq:1.6}
\Delta_{p}u\geq k>0.
\end{equation}
Concerning strongly $p$-subharmonic functions, we quote the results due Takegoshi~\cite{Takegoshi:01} for relations lying between the existence of a certain strongly $p$-subharmonic function and the volume growth property of the case $p\geq2$. Also, for $p=2$, Coghlan, Itokawa, and Kosecki~\cite{Coghlan:92} proved a Liouville-type result which said that every $2$-subharmonic function on a complete non-compact Riemannian manifold must be unbounded provided that its sectional curvature is bounded. A few years later, Leung~\cite{Leung:97} proved that the same result is valid by replacing the bounded sections curvature for the vanish first eigenvalue of $2$-Laplacian. As an application, Leung obtained an estimate for the size of the image set of some types of maps between Riemannian manifolds.

Recently, the authors~\cite{dos Santos:23} (see also~\cite{dos Santos:24,Santos:25.1,Santos:25.2}), introduced a divergence type operator, which extends the $p$-Laplacian, and developed Bochner and Reilly formulas for it. As an application, they obtained several lower estimates for the first eigenvalue of the $p$-Laplacian on compact Riemannian manifolds with or without boundary. Here, we are interest is to study the first eigenvalue of the $p$-Laplacian of complete non-compact Riemannian manifolds. More precisely, we obtain a characterization of $p$-subharmonic functions on a complete non-compact Riemannian manifold through the vanish first eigenvalue of the $p$-Laplacian. Proceeding with this picture, we obtain the following extension of the main result of~\cite[Theorem 1]{Leung:97} for the context of the $p$-Laplacian, for all $p\geq2$.
\begin{theorem}\label{teo:1.1}
If $\Sigma^{n}$ is a complete non-compact Riemannian manifold with $\lambda_{1,p}(\Sigma)=0$, then every strongly $p$-subharmonic function on $\Sigma^{n}$ is unbounded.
\end{theorem}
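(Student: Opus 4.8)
The plan is to argue by contradiction through the contrapositive: I will show that if $\Sigma^{n}$ carries a \emph{bounded} strongly $p$-subharmonic function, then $\lambda_{1,p}(\Sigma)>0$. So suppose $u$ is smooth with $\Delta_{p}u\geq k>0$ and $|u|\leq M_{0}$. Since $\Delta_{p}$ is invariant under adding constants, I may replace $u$ by $u+M_{0}+1$ and assume $1\leq u\leq M$ for some $M>0$, so that $u$ is bounded and strictly positive while still satisfying $\Delta_{p}u\geq k$. The goal is then to produce a constant $C>0$ depending only on $k$, $M$ and $p$ such that $\int_{\Sigma}\phi^{p}\,d\Sigma\leq C\int_{\Sigma}|\nabla\phi|^{p}\,d\Sigma$ for every nonnegative $\phi$ with compact support; by the variational characterization~\eqref{eq:1.3}, and since replacing $\phi$ by $|\phi|$ leaves the Rayleigh quotient unchanged, this forces $\lambda_{1,p}(\Sigma)\geq 1/C>0$, contradicting the hypothesis.

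The first step extracts a lower bound for $\int\phi^{p}$. Integrating $\Delta_{p}u\geq k$ against $\phi^{p}$ and moving the divergence onto $\phi^{p}$ — legitimate because $\phi$ is compactly supported and, as $p\geq2$, the field $|\nabla u|^{p-2}\nabla u$ is continuous — I obtain
\begin{equation*}
k\int_{\Sigma}\phi^{p}\,d\Sigma\leq\int_{\Sigma}\phi^{p}\Delta_{p}u\,d\Sigma=-p\int_{\Sigma}\phi^{p-1}\langle\nabla\phi,|\nabla u|^{p-2}\nabla u\rangle\,d\Sigma\leq p\int_{\Sigma}\phi^{p-1}|\nabla\phi||\nabla u|^{p-1}\,d\Sigma.
\end{equation*}
Applying Young's inequality with exponents $p$ and $p'=p/(p-1)$ to split the integrand into $|\nabla\phi|^{p}$ and $\phi^{p}|\nabla u|^{p}$ gives, for any $\delta>0$,
\begin{equation*}
k\int_{\Sigma}\phi^{p}\,d\Sigma\leq\delta^{p}\int_{\Sigma}|\nabla\phi|^{p}\,d\Sigma+\frac{p}{p'}\,\delta^{-p'}\int_{\Sigma}\phi^{p}|\nabla u|^{p}\,d\Sigma,
\end{equation*}
so it remains only to control the energy term $\int_{\Sigma}\phi^{p}|\nabla u|^{p}$.

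The crux is a Caccioppoli-type estimate for this energy, and it is here that both the positivity and the boundedness of $u$ are used. Testing $\Delta_{p}u$ against $u\phi^{p}$ and integrating by parts yields
\begin{equation*}
\int_{\Sigma}\phi^{p}|\nabla u|^{p}\,d\Sigma=-\int_{\Sigma}u\phi^{p}\Delta_{p}u\,d\Sigma-p\int_{\Sigma}u\phi^{p-1}\langle\nabla\phi,|\nabla u|^{p-2}\nabla u\rangle\,d\Sigma.
\end{equation*}
Since $u>0$, $\Delta_{p}u\geq k>0$ and $\phi\geq0$, the first term on the right is nonpositive and may be discarded; bounding the second term by $pM\int_{\Sigma}\phi^{p-1}|\nabla\phi||\nabla u|^{p-1}$ and invoking Young's inequality once more, I reach
\begin{equation*}
\int_{\Sigma}\phi^{p}|\nabla u|^{p}\,d\Sigma\leq\frac{pM\eta^{p'}}{p'}\int_{\Sigma}\phi^{p}|\nabla u|^{p}\,d\Sigma+\frac{M}{\eta^{p}}\int_{\Sigma}|\nabla\phi|^{p}\,d\Sigma.
\end{equation*}
Choosing $\eta$ so small that $pM\eta^{p'}/p'\leq 1/2$ lets me absorb the energy term into the left-hand side, producing $\int_{\Sigma}\phi^{p}|\nabla u|^{p}\leq C_{1}\int_{\Sigma}|\nabla\phi|^{p}$ with $C_{1}=C_{1}(M,p)$. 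Substituting this into the estimate from the previous step and optimizing in $\delta$ gives $k\int_{\Sigma}\phi^{p}\leq C_{2}\int_{\Sigma}|\nabla\phi|^{p}$, which is exactly the inequality sought.

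I expect the Caccioppoli estimate to be the main obstacle, since it is the one place where the hypotheses enter essentially and somewhat delicately: the positivity of $u$ is what allows the unsigned term $\int_{\Sigma}u\phi^{p}\Delta_{p}u$ to be thrown away, while the bound $|u|\leq M$ is precisely what renders the remaining cross term absorbable, and care is needed so that the constant generated by Young's inequality is strictly below $1$ to permit the absorption. The remaining steps are formal integrations by parts and applications of Young's inequality, all valid because $\phi$ has compact support and $p\geq2$ keeps $|\nabla u|^{p-2}\nabla u$ continuous.
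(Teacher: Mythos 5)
Your proof is correct, but it follows a genuinely different route from the paper's. The paper rescales $u$ so that $\Delta_{p}f\geq1$ and then invokes Lemma~\ref{lem:2.1}, a pointwise maximum-principle (barrier) argument in the spirit of Palmer and Leung: at an interior maximum of $v=u/(a-f)$, where $u$ is a positive first eigenfunction on a relatively compact domain, the linearized operator $\mathrm{L_{u}}$ yields $\lambda_{1,p}(D)\geq(\beta-\alpha)^{1-p}$, and this passes to $\Sigma^{n}$ through the exhaustion characterization~\eqref{eq:1.5}. You instead shift $u$ to be positive and bounded and derive a Poincar\'e-type inequality $k\int_{\Sigma}\phi^{p}\,d\Sigma\leq C\int_{\Sigma}\mid\nabla\phi\mid^{p}d\Sigma$ directly from the weak formulation, the crux being the Caccioppoli estimate obtained by testing against $u\phi^{p}$: positivity of $u$ lets you discard the term $-\int_{\Sigma}u\phi^{p}\Delta_{p}u\,d\Sigma\leq0$, boundedness makes the cross term absorbable, and the absorption is legitimate because $\int_{\Sigma}\phi^{p}\mid\nabla u\mid^{p}d\Sigma$ is finite for compactly supported $\phi$. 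Both of your applications of Young's inequality carry the stated constants, and the passage from nonnegative compactly supported test functions to all of $W^{1,p}_{0}(\Sigma)$ via $\phi\mapsto\mid\phi\mid$ and density is standard, so the contradiction with $\lambda_{1,p}(\Sigma)=0$ through~\eqref{eq:1.3} is sound. As for what each approach buys: yours is self-contained, needing neither the linearization machinery of Section~\ref{sec:setup}, nor the existence, positivity and regularity of first eigenfunctions on compact domains (Veron, Lindqvist), nor the exhaustion limit; moreover, once $\Delta_{p}u\geq k$ is read in the weak sense, nothing in your argument actually requires $p\geq2$, so it also covers $1<p<2$, where Lemma~\ref{lem:2.1} is unavailable because $\mathrm{L_{u}}$ loses ellipticity. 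The paper's route, in exchange, produces the cleaner quantitative bound $\lambda_{1,p}(\Sigma)\geq1/(\beta-\alpha)^{p-1}$, depending only on the oscillation of the normalized function, and Lemma~\ref{lem:2.1} is a statement of independent interest beyond this theorem.
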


This manuscript is organized as follows: in Section~\ref{sec:setup} we present the basic background concerning the $p$-Laplacian as well as an important result which used to prove our main result (cf. Lemma~\ref{lem:2.1}). Next, in Section~\ref{sec:proof}, we start giving some examples and prove Theorem~\ref{teo:1.1} as well as a direct consequence (cf. Corollary~\ref{cor:3.4}). The last section is devoted to applications of the Theorem~\ref{teo:1.1} in the theory of isometric immersion. In this setting, we obtain results of nonexistence (cf. Theorems~\ref{teo:4.2}, \ref{teo:4.4} and Corollaries~\ref{cor:4.3}, \ref{cor:4.5}, \ref{cor:4.6}) and uniqueness (cf. Theorems~\ref{teo:4.6} and Corollaries~\ref{cor:4.7}, \ref{cor:4.8}) of hypersurfaces contained into a slab of certain warped product manifolds.

\section{A linear operator and key Lemma}\label{sec:setup}

By investigating lower estimates for the first eigenvalue of the $p$-Laplacian, Kawai and Nakauchi~\cite{Kawai:03} introduced an important linearization of the $p$-Laplacian. In fact, for every function $u\in\mathcal{C}^{1}(\Sigma)$, they defined the following functional:
\begin{equation}\label{eq:2.1}
\mathrm{P_{u}}(f)=\mid\nabla u\mid^{p-2}\Delta f+(p-2)\mid\nabla u\mid^{p-4}\langle{\rm Hess}f(\nabla u),\nabla u\rangle,\quad f\in\mathcal{C}^{2}(\Sigma),
\end{equation}
where ${\rm Hess}f$ denotes the Hessian of $f:\Sigma^{n}\to\mathbb{R}$.

As observed in~\cite{dos Santos:23}, if $u\in\mathcal{C}^{2}(\Sigma)$ with $\nabla u\neq0$, the functional $\mathrm{P_{u}}$ can be split as follows:
\begin{equation}\label{eq:2.2}
\mathrm{P_{u}}(f)=\mathrm{L_{u}}(f)+(p-2)\mid\nabla u\mid^{p-2}\mathrm{R_{u}}(f),
\end{equation}
with,
\begin{equation}\label{eq:2.3}
\mathrm{L_{u}}(f)={\rm div}(\mid\nabla u\mid^{p-2}\nabla f)\quad\mbox{and}\quad \mathrm{R_{u}}(f)=\langle\nabla f,\nabla\ln\mid\nabla u\mid\rangle-\mathrm{A_{u}}(f),
\end{equation}
where the functional $\mathrm{A_{u}}$ is defined by (see~\cite{dos Santos:23,Valtorta:14}),
\begin{equation}\label{eq:2.4}
\mathrm{A_{u}}(f)=\dfrac{\langle{\rm Hess}f(\nabla u),\nabla u\rangle}{\mid\nabla u\mid^{2}},\quad f\in\mathcal{C}^{2}(\Sigma).
\end{equation}
Moreover, by using~\eqref{eq:2.3},
\begin{equation}\label{eq:2.5}
\mathrm{L_{u}}(f)=\mid\nabla u\mid^{p-2}\Delta f+(p-2)\mid\nabla u\mid^{p-4}\langle{\rm Hess}u(\nabla u),\nabla f\rangle.
\end{equation}
In particular, for every $u\in\mathcal{C}^{2}(\Sigma)$, it follows from~\eqref{eq:1.1} and~\eqref{eq:2.4},
\begin{equation}\label{eq:2.5.1}
\mathrm{L_{u}}(u)=\Delta_{p}u=\mid\nabla u\mid^{p-2}\left(\Delta u+(p-2)\mathrm{A_{u}}(u)\right). 
\end{equation}
A standard computation allows to check that $\mathrm{L_{u}}$ satisfy Leibniz's rule and is elliptic for $p\geq2$.

By using this previous machinery, the next result is an extension of~\cite[Lemma 1]{Palmer:90} due Palmer to the $p$-Laplacian for $p\geq2$ (see also~\cite{Leung:92}).
\begin{lemma}\label{lem:2.1}
Let $D$ be a relatively compact smoothly bounded domain on a Riemannian manifold $\Sigma^{n}$. Let $\lambda_{1,p}(D)$ denote the first eigenvalue of the problem,
\begin{equation}
\begin{cases}
\Delta_p u+\lambda\mid u\mid^{p-2}u=0 & \text { in } D, \\ u=0 & \text { on } \partial D.
\end{cases}
\end{equation}
Suppose $f$ is a smooth function on $\overline{D}$ that satisfies $\Delta_p f \geq 1$ in $D$. Then,
\begin{equation}
\lambda_{1,p}(D)\geq\frac{1}{(\beta-\alpha)^{p-1}},
\end{equation}
where $\alpha, \beta$ are any lower and upper bounds respectively of $f$ on $\overline{D}$.
\end{lemma}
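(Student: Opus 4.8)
The plan is to estimate the Rayleigh quotient in \eqref{eq:1.3} from below directly: I would convert the hypothesis $\Delta_p f\ge 1$ into a strictly positive, bounded, $p$-superharmonic comparison function, and then feed that function into a Picone-type inequality to produce the bound $\int_D|\nabla v|^p\ge c\int_D|v|^p$ for every admissible test function $v$.

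First I would normalize. For a parameter $\epsilon>0$, set
\[
w=\frac{\beta+\epsilon-f}{\beta-\alpha+\epsilon}.
\]
Since $\alpha\le f\le\beta$ on $\overline D$, this gives $0<\tfrac{\epsilon}{\beta-\alpha+\epsilon}\le w\le 1$, while $\nabla w=-(\beta-\alpha+\epsilon)^{-1}\nabla f$ yields
\[
\Delta_p w=\frac{-\Delta_p f}{(\beta-\alpha+\epsilon)^{p-1}}\le-\frac{1}{(\beta-\alpha+\epsilon)^{p-1}}<0.
\]
Writing $c_\epsilon:=(\beta-\alpha+\epsilon)^{-(p-1)}$, we thus have a function with $0<w\le1$ and $-\Delta_p w\ge c_\epsilon>0$. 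The shift by $\epsilon$ is only there to keep $w$ bounded away from $0$ at an interior maximum of $f$ (recall $\beta$ is merely an upper bound); letting $\epsilon\to0^+$ at the very end replaces $c_\epsilon$ by $(\beta-\alpha)^{-(p-1)}$.

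The heart of the argument is Picone's identity for the $p$-Laplacian. For any $v\in W_0^{1,p}(D)\setminus\{0\}$, applying it with numerator $|v|$ and the positive denominator $w$ gives the pointwise bound
\[
|\nabla v|^p\ge\left\langle|\nabla w|^{p-2}\nabla w,\ \nabla\!\left(\frac{|v|^p}{w^{p-1}}\right)\right\rangle,
\]
which is just Young's inequality applied to the cross term. Integrating over $D$ and transferring the divergence onto $w$ — the boundary integral vanishing because $v=0$ while $w>0$ on $\partial D$ — I obtain
\[
\int_D|\nabla v|^p\,d\Sigma\ge-\int_D\frac{|v|^p}{w^{p-1}}\,\Delta_p w\,d\Sigma\ge c_\epsilon\int_D\frac{|v|^p}{w^{p-1}}\,d\Sigma.
\]
Since $0<w\le1$ forces $w^{p-1}\le1$ and hence $w^{-(p-1)}\ge1$, the last integral dominates $c_\epsilon\int_D|v|^p\,d\Sigma$. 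Therefore $\int_D|\nabla v|^p\,d\Sigma\ge c_\epsilon\int_D|v|^p\,d\Sigma$ for all such $v$, and the variational characterization \eqref{eq:1.3} gives $\lambda_{1,p}(D)\ge c_\epsilon$; sending $\epsilon\to0^+$ yields the claim.

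I expect the main obstacle to be the analytic justification of the step that Picone's identity packages together, namely the legitimacy of the integration by parts when the test functions are only $C^{1,\alpha}$ and the weight $|\nabla w|^{p-2}$ degenerates on the critical set $\{\nabla f=0\}$. This is exactly where $p\ge2$ is used: the weight is then bounded and continuous, so a standard approximation (testing against $|v|^p/w^{p-1}$ for smooth compactly supported $v$ and passing to the limit) makes the computation rigorous, and the positivity needed for the denominator is already secured by the $\epsilon$-shift. Equivalently, one may run the argument with $v$ the positive first eigenfunction $\phi$ of $D$, in which case the divergence-form operator $\mathrm{L}_{\phi}$ of \eqref{eq:2.3} supplies exactly the self-adjointness used in the integration by parts.
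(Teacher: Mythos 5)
Your proof is correct, and it takes a genuinely different route from the paper's. The paper argues pointwise, in the Palmer--Leung barrier style: it takes the positive first eigenfunction $u$ of $D$, fixes $a>\beta$, considers $v=u/(a-f)$, which vanishes on $\partial D$ and therefore attains an interior maximum at some $x_{0}$; there $\nabla v(x_{0})=0$ and, by ellipticity of the linearization $\mathrm{L_{u}}$ (this is where $p\geq2$ enters), $\mathrm{L_{u}}(v)(x_{0})\leq 0$, and unwinding these two facts yields $0\geq\tfrac{1}{a-f}\Delta_{p}u+\tfrac{u^{p-1}}{(a-f)^{p}}\Delta_{p}f$ at $x_{0}$, hence $\lambda_{1,p}(D)\geq (a-\alpha)^{-(p-1)}$, and finally $a\to\beta$. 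Your Picone/Barta-type argument replaces all of this by an integral estimate of the Rayleigh quotient against the explicit bounded supersolution $w$. What your route buys: it never touches the first eigenfunction, so it needs neither the existence/positivity input (\cite{Veron:91}, \cite{Lindqvist:12}) nor --- more significantly --- the second-order differentiability of $u$ that the paper uses when computing $\mathrm{Hess}\,u$ at $x_{0}$ (eigenfunctions of the $p$-Laplacian are in general only $C^{1,\alpha}$, so the paper's pointwise computation needs an extra regularity justification that you avoid entirely); it also plugs directly into the variational characterization \eqref{eq:1.3}, and your $\epsilon$-shift cleanly handles the possibility that $f$ attains the value $\beta$. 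What the paper's route buys: it is purely pointwise, so it requires none of the integration-by-parts and density care that your argument needs on the set $\{\nabla f=0\}$ (in fact, for $p>2$ the hypothesis $\Delta_{p}f\geq1$ forces $\nabla f\neq0$ throughout $D$, which would further simplify your regularity discussion, while for $p=2$ there is no degeneracy at all), and it stays inside the $\mathrm{L_{u}}$-machinery of Section~\ref{sec:setup} that the paper reuses elsewhere. Both proofs invoke $p\geq2$ at the analogous spot (ellipticity of $\mathrm{L_{u}}$ there; boundedness and continuity of the weight $\mid\nabla w\mid^{p-2}$ here) and both end with the same limiting device ($a\to\beta$ there, $\epsilon\to0^{+}$ here).
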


\begin{proof}
First of all, let us consider $u$ the first eigenfunction associated with $\lambda_{1,p}(D)$, and assume, without loss of generality, that $u>0$. We consider $a$ any constant such as $\beta<a$. Let us define then the function $v=\frac{u}{a-f}$. We observe that $v$ attains the maximum at some point $x_0\in\text{int}(D)$ since $v\equiv 0$ in $\partial D$ and $a-f>0$. Hence in $x_{0}$,
\begin{equation}\label{eq:2.6}
\nabla v (x_0)=0\quad\mbox{and}\quad \mathrm{L_u} (v)(x_0)\leq 0.
\end{equation}
	
A direct computation shows that the gradient of $v$ is given by,
\begin{equation}\label{eq:2.7}
\nabla v=\dfrac{1}{a-f}\nabla u+\dfrac{u}{(a-f)^{2}}\nabla f
\end{equation}
and that, for all $X\in\mathfrak{X}(\Sigma)$, the hessian of $v$ is
\begin{equation}\label{eq:2.8}
\begin{split}
\nabla_{X}\nabla v&=\dfrac{1}{(a-f)^{2}}\langle\nabla f,X\rangle\nabla u+\dfrac{1}{a-f}\nabla_{X}\nabla u+\dfrac{u}{(a-f)^{2}}\nabla_{X}\nabla f\\
&\quad+\dfrac{1}{(a-f)^{3}}\left((a-f)\langle\nabla u,X\rangle+2u\langle X,\nabla f\rangle\right)\nabla f.
\end{split}
\end{equation}
By tracing~\eqref{eq:2.8}, we have
\begin{equation}\label{eq:2.9}
\begin{split}
\Delta v&=\dfrac{1}{a-f}\Delta u+\dfrac{u}{(a-f)^{2}}\Delta f+\dfrac{2u}{(a-f)^{3}}\mid\nabla f\mid^{2}+\dfrac{2}{(a-f)^{2}}\langle\nabla u,\nabla f\rangle.
\end{split}
\end{equation}
Moreover, from~\eqref{eq:2.4} and~\eqref{eq:2.7},
\begin{equation}\label{eq:2.10}
\langle{\rm Hess}u(\nabla u),\nabla v\rangle=\dfrac{1}{a-f}\langle{\rm Hess}u(\nabla u),\nabla u\rangle+\dfrac{u}{(a-f)^{2}}\langle{\rm Hess}u(\nabla u),\nabla f\rangle.
\end{equation}
Hence, by inserting~\eqref{eq:2.7} and~\eqref{eq:2.10} in~\eqref{eq:2.5}, we get
\begin{equation}\label{eq:2.11}
\begin{split}
\mathrm{L_u}(v)&=\dfrac{1}{a-f}\Delta_{p}u+\dfrac{u}{(a-f)^{2}}\mathrm{L_{u}}(f)+\dfrac{2u}{(a-f)^{3}}\mid\nabla f\mid^{2}\mid\nabla u\mid^{p-2}\\
&\quad+\dfrac{2}{(a-f)^{2}}\langle\nabla u,\nabla f\rangle\mid\nabla u\mid^{p-2}.
\end{split}
\end{equation}
	
On the other hand, once $\nabla v(x_0)=0$, \eqref{eq:2.7} gives, $\nabla u=-\dfrac{u}{a-f}\nabla f$ in $x_0$. Hence, in $x_{0}$,
\begin{equation}\label{eq:2.12}
{\rm Hess}u(\nabla u)=\dfrac{u^2}{(a-f)^{2}}{\rm Hess}f(\nabla f),
\end{equation}
and consequently
\begin{equation}\label{eq:2.13}
\begin{split}
\mathrm{L_{u}}(f)&=\mid\nabla u\mid^{p-2}\Delta f+(p-2)\mid\nabla u\mid^{p-4}\langle{\rm Hess}u(\nabla u),\nabla f\rangle\\
&=\left(\dfrac{u}{a-f}\right)^{p-2}\mid\nabla f\mid^{p-2}\Delta f+(p-2)\left(\dfrac{u}{a-f}\right)^{p-2}\mid\nabla f\mid^{p-2}A_{f}(f)\\
&=\left(\dfrac{u}{a-f}\right)^{p-2}\Delta_{p}f.
\end{split}
\end{equation}
Besides this,
\begin{equation}\label{eq:2.14}
\begin{split}
\dfrac{2u}{(a-f)^{3}}\mid\nabla f\mid^{2}+\dfrac{2}{(a-f)^{2}}\langle\nabla u,\nabla f\rangle=0.
\end{split}
\end{equation}
Therefore, from~\eqref{eq:2.11}, ~\eqref{eq:2.13} and~\eqref{eq:2.14} we obtain in $x_{0}$,
\begin{equation}\label{eq:2.15}
0\geq \mathrm{L_{u}}(v)=\dfrac{1}{a-f}\Delta_{p}u+\dfrac{u^{p-1}}{(a-f)^{p}}\Delta_{p}f.
\end{equation}
Since $\Delta_{p}u=-\lambda_{1,p}(D)\mid u\mid^{p-2}u$, $u(x_{0})>0$, $\alpha<f(x_{0})<a$, and $\Delta_{p}f\geq1$, \eqref{eq:2.15} implies
\begin{equation}\label{eq:2.16}
\lambda_{1,p}(D)\geq\dfrac{1}{(a-\alpha)^{p-1}}.
\end{equation}
By taking $a\to\beta$ we obtain the desired.
\end{proof}

\section{Proof of Theorem~\ref{teo:1.1}}\label{sec:proof}

Before presenting the proof of our main result, we will give some examples and classical results concerning complete Riemannian manifolds having vanished the first $p$-Laplacian eigenvalue. The first one is (cf. \cite{do Carmo:99})
\begin{example}\label{ex:3.1}
Let $\Sigma^2=(\mathbb{R}^2, ds^2),$ with $ds^2=dr^2+g^2(r)d\theta^2$, where $g(r)$ is a nonnegative $\mathcal{C}^2([0,+\infty))$ function with $g(0)=0$, $g(r)>0$ for $0<r\leq 1$ and $g(r)=e^{-r}$ for $r>1$. Note that ${\rm vol}(\Sigma)<+\infty$ and hence $\lambda_{1,p}(\Sigma)=0$.
\end{example}

Let $B_{r}(q)\subset\Sigma^{n}$ be the geodesic ball centered at $q$ and with radius $r$. We say that the volume of $\Sigma^{n}$ has polynomial growth if there exist positive numbers $a$ and $c$ such that ${\rm vol}(B_{r}(q))\leq cr^{a}$. In this setting,  the next two results are due Batista et. al~\cite{Batista:14}, extends the classical result due do Carmo~\cite{do Carmo:99} for all $p>2$. The first one assume that the Riemannian manifold has infinite volume:
\begin{example}\label{ex:3.2} Let $\Sigma^n$ be an open manifold with infinite volume and $\Omega$ an arbitrary compact set of $\Sigma^{n}$. If $\Sigma^{n}$ has polynomial volume growth, then $\lambda_{1,p}(M\backslash\Omega)=0$.
\end{example}

The second assume that the Riemannian manifold is complete and non-compact:
\begin{example}\label{ex:3.3} 
Let $\Sigma^{n}$ be a complete non-compact Riemannian manifold with polynomial volume growth, then $\lambda_{1,p}(\Sigma)=0$.
\end{example}

Now, we are able to proof our main result.
\begin{proof}[\underline{Proof of Theorem~\ref{teo:1.1}}]
	
Let us suppose that $u\in\mathcal{C}^{2}(\Sigma)$ is bounded. Since $u$ is a strongly $p$-subharmonic function, from~\eqref{eq:1.6}
\begin{equation}\label{eq:3.1}
\Delta_{p}u\geq k>0,
\end{equation}
for some positive constant $k$. Now, we consider the function $f:\Sigma^{n}\to\mathbb{R}$ given by
\begin{equation}\label{eq:3.2}
f=\dfrac{u}{k^{\frac{1}{p-1}}}.
\end{equation}
From this, direct computation gives
\begin{equation}\label{eq:3.3}
\nabla f=\dfrac{1}{k^{\frac{1}{p-1}}}\nabla u\quad\mbox{and}\quad A_{f}(f)=\dfrac{1}{k^{\frac{1}{p-1}}}\mathrm{A_{u}}(u).
\end{equation}
Hence, by inserting~\eqref{eq:3.3} in~\eqref{eq:2.5.1}, it follows from~\eqref{eq:3.1}
\begin{equation}\label{eq:3.4}
\begin{split}
\Delta_{p}f&=\mid\nabla f\mid^{p-2}\left(\Delta f+(p-2)A_{f}(f)\right)\\
&=\left(\dfrac{1}{k^{\frac{1}{p-1}}}\right)^{p-2}\dfrac{1}{k^{\frac{1}{p-1}}}\left(\Delta u+(p-2)\mathrm{A_{u}}(u)\right)\\
&=\dfrac{1}{k}\Delta_{p}u\geq1.
\end{split}
\end{equation}
On the other hand, from~\eqref{eq:3.2} we see that $f$ is bounded. Thus, there exists positive constant $\alpha$ and $\beta$ such that $\alpha<f<\beta$. Therefore, by applying Lemma~\ref{lem:2.1},
\begin{equation}\label{eq:3.5}
\lambda_{1,p}(\Sigma)\geq\dfrac{1}{(\beta-\alpha)^{p-1}}>0,
\end{equation}
which contradicts the fact that the first eigenvalue is zero.
\end{proof}

As an immediate application, it follows from Example~\ref{eq:3.3},
\begin{corollary}\label{cor:3.4}
If $\Sigma^{n}$ is a complete non-compact Riemannian manifold with polynomial volume growth, then every strongly $p$-subharmonic function on $\Sigma^{n}$ is unbounded.
\end{corollary}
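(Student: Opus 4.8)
The plan is to obtain Corollary~\ref{cor:3.4} as an immediate specialization of Theorem~\ref{teo:1.1}, using Example~\ref{ex:3.3} to verify its sole hypothesis. First I would invoke Example~\ref{ex:3.3}: since $\Sigma^{n}$ is complete, non-compact, and has polynomial volume growth, one has $\lambda_{1,p}(\Sigma)=0$. This is precisely the assumption required by Theorem~\ref{teo:1.1}, so I would then quote that theorem to conclude that every strongly $p$-subharmonic function on $\Sigma^{n}$ is unbounded. No separate argument is needed beyond chaining these two facts.

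Because the corollary is a direct consequence, there is no genuine obstacle in its own proof; the substantive content has already been absorbed into Example~\ref{ex:3.3} (translating the geometric polynomial-growth condition into the analytic statement $\lambda_{1,p}(\Sigma)=0$) and into Theorem~\ref{teo:1.1}. Were I to give a self-contained argument bypassing the intermediate eigenvalue reformulation, the load-bearing step would be the mechanism of Theorem~\ref{teo:1.1}: assuming a bounded strongly $p$-subharmonic $u$, rescaling to $f=u/k^{1/(p-1)}$ so that $\Delta_{p}f\geq1$, applying Lemma~\ref{lem:2.1} on each member of a compact exhaustion $\{\Omega_{k}\}$ to obtain $\lambda_{1,p}(\Omega_{k})\geq(\beta-\alpha)^{-(p-1)}$, and passing to the limit through~\eqref{eq:1.5} to contradict $\lambda_{1,p}(\Sigma)=0$. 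Since the polynomial-growth hypothesis supplies exactly this vanishing via Example~\ref{ex:3.3}, the cleanest route is to combine the two cited results directly rather than reprove them.

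One point I would double-check is that the polynomial-growth statement of Example~\ref{ex:3.3} applies to $\Sigma^{n}$ itself (not merely to a complement $\Sigma\setminus\Omega$ as in Example~\ref{ex:3.2}), so that the hypothesis $\lambda_{1,p}(\Sigma)=0$ of Theorem~\ref{teo:1.1} is literally met on all of $\Sigma^{n}$; given the way Example~\ref{ex:3.3} is phrased this is automatic, so the implication is clean.
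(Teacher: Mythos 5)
Your proposal is correct and coincides with the paper's own route: the paper states Corollary~\ref{cor:3.4} as an immediate consequence of combining Example~\ref{ex:3.3} (polynomial volume growth of a complete non-compact manifold forces $\lambda_{1,p}(\Sigma)=0$) with Theorem~\ref{teo:1.1}. Your additional check that Example~\ref{ex:3.3}, rather than Example~\ref{ex:3.2}, is the one giving the vanishing eigenvalue on all of $\Sigma^{n}$ is exactly the right point to verify, and it holds.
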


\section{Hypersurfaces in warped product manifolds}\label{sec:applications}

Let $(M^{n},\langle,\rangle_{M})$ be a connected $n$-dimensional $(n\geq2)$ oriented Riemannian manifold and $f:I\subset\mathbb{R}\to\mathbb{R}_{+}$ a positive smooth function. In the product differentiable manifold $I\times M^{n}$. A particular class of Riemannian manifolds is the one obtained by endowing $I\times M^{n}$ with the metric
\begin{equation}\label{wp}
\langle v,w\rangle_{(t,q)}=\langle(\pi_I)_*v,(\pi_I)_*w\rangle_{I}+(f\circ\pi_{I})^{2}\langle(\pi_M)_*v,(\pi_M)_*w\rangle_{M},
\end{equation}
with $(t,q)\in I\times M^{n}$ and $v,w\in T_{(t,q)}(I\times M)$, where $\pi_{I}$ and $\pi_{M}$ denote the projections onto the corresponding factor. Such a space is called a {\em warped product} and $f$ the {\em warped function}, and in what follows, we shall write $I\times_{f}M^{n}$ to denote it. In particular the family of hypersurfaces $M^{n}_{t}=\{t\}\times M^{n}$ (called slices) form a foliation $t\in I\to M^{n}_{t}$ of $I\times_{f}M^{n}$ by totally umbilical leaves of constant mean curvature
\begin{equation}\label{eq:4.8}
\mathcal{H}(t)=\dfrac{f'(t)}{f(t)}=(\ln f)'(t),
\end{equation}
with respect $-\partial_{t}$, where
\begin{equation}\label{eq:4.9}
\partial_{t}:=(\partial/\partial_{t})\mid_{(p,t)},\quad(t,q)\in I\times M^{n}
\end{equation}
is a conformal and unitary vector field, that is,
\begin{equation}\label{eq:4.8}
\overline{\nabla}_{X}\partial_{t}=\mathcal{H}(t)(X-\langle X,\partial_{t}\rangle\partial_{t})\quad\mbox{and}\quad\langle\partial_{t},\partial_{t}\rangle=1,
\end{equation}
for all $X\in\mathfrak{X}(\Sigma)$.

Let $x:\Sigma^{n}\to I\times_{f}M^{n}$ is an isometrically immersed connected hypersurface in the warped product manifold $I\times_{f}M^{n}$ with second fundamental $A$ with respect to the normal direction $N$. We define the {\em height function} $h:\Sigma^{n}\to\mathbb{R}$ by setting $h(q)=\langle x(q),\partial_{t}\rangle$. A direct computation shows that the gradient of $h$ on $\Sigma^{n}$ is
\begin{equation}\label{eq:4.10}
\nabla h=\partial_t^{\top}=\partial_t-\langle N,\partial_{t}\rangle N,
\end{equation}
where $(\cdot)^{\top}$ denotes the tangential component of a vector field on $I\times_{f}M^{n}$ along $\Sigma^{n}$ and $\langle N,\partial_t\rangle$ is the {\em angle function}. Thus, we get
\begin{equation}\label{eq:4.11}
\mid\nabla h\mid^2=\mid\partial_{t}^{\top}\mid^2=1-\langle N,\partial_t\rangle^2,
\end{equation}
where $\mid\cdot\mid$ denotes the norm of a vector field on $\Sigma^{n}$.

As a sub-product of the digression above, we have the $p$-Laplacian version of~\cite[Proposition 6]{Alias:13}.
\begin{lemma}\label{lem:4.1}
Let $x:\Sigma^{n}\to I\times_{f}M^{n}$ be an isometric immersion into a warped product manifold. Define:
\begin{equation}\label{eq:4.16.0}
\sigma(h)=\int_{t_{0}}^{h(\cdot)}f(u)du,
\end{equation}
where $h$ is the height function. If $q\in\Sigma^{n}$ is a point contained in a domain $U$ and $\nabla h\neq0$ on $U$, then
\begin{equation}\label{eq:4.16}
\begin{split}
\Delta_{p}h&=\langle N,\partial_{t}\rangle\left(nH+(p-2)\mid\nabla h\mid^{-2}\langle A(\nabla h),\nabla h\rangle\right)\mid\nabla h\mid^{p-2}\\
&\quad+\mathcal{H}(h)\left(n+p-2-(p-1)\mid\nabla h\mid^{2}\right)\mid\nabla h\mid^{p-2},
\end{split}
\end{equation}
and
\begin{equation}\label{eq:4.16.5}
\begin{split}
\Delta_{p}\sigma(h)&=f(h)^{p-1}\langle N,\partial_{t}\rangle\left(nH+(p-2)\mid\nabla h\mid^{-2}\langle A(\nabla h),\nabla h\rangle\right)\mid\nabla h\mid^{p-2}\\
&\quad+(n+p-2)\mathcal{H}(h)f(h)^{p-1}\mid\nabla h\mid^{p-2},
\end{split}
\end{equation}
for all $p\in(1,+\infty)$.
\end{lemma}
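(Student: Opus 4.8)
The plan is to reduce both identities to the intrinsic Laplacian and the contracted Hessian of the height function and then feed them into the splitting~\eqref{eq:2.5.1}. Since $\partial_t=\overline{\nabla}\pi_I$ and $h=\pi_I\circ x$, the Gauss formula relates the induced Hessian to the ambient one by ${\rm Hess}\,h(X,Y)=\langle\overline{\nabla}_X\partial_t,Y\rangle+\langle N,\partial_t\rangle\langle AX,Y\rangle$ for $X,Y$ tangent to $\Sigma^n$. Applying the conformality relation~\eqref{eq:4.8} to the first term, and using that $\langle X,\partial_t\rangle=\langle X,\partial_t^{\top}\rangle=\langle X,\nabla h\rangle$ by~\eqref{eq:4.10} for tangential $X$, I would obtain
\begin{equation}
{\rm Hess}\,h(X,Y)=\mathcal{H}(h)\big(\langle X,Y\rangle-\langle X,\nabla h\rangle\langle Y,\nabla h\rangle\big)+\langle N,\partial_{t}\rangle\langle AX,Y\rangle.
\end{equation}

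Two contractions then give everything. Tracing over an orthonormal frame of $\Sigma^n$ yields $\Delta h=\mathcal{H}(h)\big(n-\mid\nabla h\mid^{2}\big)+nH\langle N,\partial_t\rangle$, while setting $X=Y=\nabla h$ and dividing by $\mid\nabla h\mid^{2}$ gives $\mathrm{A_{h}}(h)=\mathcal{H}(h)\big(1-\mid\nabla h\mid^{2}\big)+\langle N,\partial_t\rangle\mid\nabla h\mid^{-2}\langle A(\nabla h),\nabla h\rangle$. Substituting both into $\Delta_{p}h=\mid\nabla h\mid^{p-2}\big(\Delta h+(p-2)\mathrm{A_{h}}(h)\big)$ from~\eqref{eq:2.5.1} and grouping the $\langle N,\partial_t\rangle$ and $\mathcal{H}(h)$ contributions separately produces~\eqref{eq:4.16}; the only algebraic point is the identity $(n-\mid\nabla h\mid^{2})+(p-2)(1-\mid\nabla h\mid^{2})=n+p-2-(p-1)\mid\nabla h\mid^{2}$.

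For the second formula I would use the chain rule $\nabla\sigma(h)=f(h)\nabla h$, so that $\mid\nabla\sigma(h)\mid^{p-2}\nabla\sigma(h)=f(h)^{p-1}\mid\nabla h\mid^{p-2}\nabla h$, and then apply the Leibniz rule for the divergence to get $\Delta_{p}\sigma(h)=f(h)^{p-1}\Delta_{p}h+(p-1)f(h)^{p-2}f'(h)\mid\nabla h\mid^{p}$. Inserting~\eqref{eq:4.16} and rewriting $f'(h)=\mathcal{H}(h)f(h)$ via~\eqref{eq:4.8}, the new term $(p-1)\mathcal{H}(h)f(h)^{p-1}\mid\nabla h\mid^{2}\mid\nabla h\mid^{p-2}$ cancels exactly the $-(p-1)\mid\nabla h\mid^{2}$ sitting inside the warped factor of $f(h)^{p-1}\Delta_p h$, collapsing it to the constant $n+p-2$ and yielding~\eqref{eq:4.16.5}.

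The main obstacle is the Hessian identity: one must keep the sign convention for the shape operator $A$ consistent between the Gauss formula and the normalization $nH={\rm tr}\,A$, and one must justify applying~\eqref{eq:4.8} to tangential vectors of $\Sigma^n$ rather than to general ambient fields. Once ${\rm Hess}\,h$ is established, the remaining steps are routine bookkeeping, and since both the splitting~\eqref{eq:2.5.1} and the Leibniz rule hold wherever $\nabla h\neq0$ without any restriction on $p$, the computation is valid for all $p\in(1,+\infty)$.
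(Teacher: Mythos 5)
Your proposal is correct and follows essentially the same route as the paper: the Hessian identity you derive for $h$ is exactly the paper's formula, its trace and its contraction with $\nabla h$ give the paper's expressions for $\Delta_{2}h$ and $\mathrm{A_{h}}(h)$, and substitution into the splitting~\eqref{eq:2.5.1} yields~\eqref{eq:4.16}. The only (cosmetic) difference is in the second formula: you obtain $\Delta_{p}\sigma(h)=f(h)^{p-1}\Delta_{p}h+(p-1)f(h)^{p-2}f'(h)\mid\nabla h\mid^{p}$ by applying the Leibniz rule directly to the divergence form of $\Delta_{p}$, whereas the paper recomputes $\mathrm{A}_{\sigma(h)}(\sigma(h))$ and reuses the splitting~\eqref{eq:2.5.1} — the two computations are equivalent and lead to the same cancellation giving~\eqref{eq:4.16.5}.
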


\begin{proof}
From Gauss and Weingarten formulas, the Hessian of the height function and the gradient of the angle function satisfy:
\begin{equation}\label{eq:4.12}
\nabla_{X}\nabla h=\langle N,\partial_{t}\rangle A(X)+\mathcal{H}(h)\left(X-\langle X,\nabla h\rangle\nabla h\right),
\end{equation}
and
\begin{equation}\label{eq:4.13}
X\langle N,\partial_{t}\rangle=-\langle A(\nabla h),X\rangle-\mathcal{H}(h)\langle N,\partial_{t}\rangle\langle\nabla h,X\rangle,
\end{equation}
for every $X\in\mathfrak{X}(\Sigma)$. Consequently,
\begin{equation}\label{eq:4.14}
\Delta_{2}h=n\langle N,\partial_{t}\rangle H+\mathcal{H}(h)(n-\mid\nabla h\mid^{2}),
\end{equation}
and
\begin{equation}\label{eq:4.15}
A_{h}(h)=\mid\nabla h\mid^{-2}\langle N,\partial_{t}\rangle\langle A(\nabla h),\nabla h\rangle+\mathcal{H}(h)\left(1-\mid\nabla h\mid^{2}\right).
\end{equation}
Hence, by inserting~\eqref{eq:4.14} and~\eqref{eq:4.15} in~\eqref{eq:2.5.1}, we get
\begin{equation}\label{eq:4.16.1}
\begin{split}
\Delta_{p}h&=\langle N,\partial_{t}\rangle\left(nH+(p-2)\mid\nabla h\mid^{-2}\langle A(\nabla h),\nabla h\rangle\right)\mid\nabla h\mid^{p-2}\\
&\quad+\mathcal{H}(h)\left(n+p-2-(p-1)\mid\nabla h\mid^{2}\right)\mid\nabla h\mid^{p-2}.
\end{split}
\end{equation}
On the other hand, since $\nabla\sigma(h)=f(h)\nabla h$, we have
\begin{equation}\label{eq:4.16.2}
\nabla_{X}\nabla\sigma(h)(X)=f(h)\nabla_{X}\nabla h+f'(h)\langle X,\nabla h\rangle\nabla h,
\end{equation}
and
\begin{equation}\label{eq:4.16.3}
A_{\sigma(h)}(\sigma(h))=f(h)A_{h}(h)+f'(h)\mid\nabla h\mid^{2}.
\end{equation}
Hence
\begin{equation}\label{eq:4.16.4}
\Delta_{p}\sigma(h)=f(h)^{p-1}\left(\Delta_{p}h+(p-1)\mathcal{H}(h)\mid\nabla h\mid^{p}\right).
\end{equation}
So, by inserting~\eqref{eq:4.16.1} in~\eqref{eq:4.16.4}, we conclude that
\begin{equation}
\begin{split}
\Delta_{p}\sigma(h)&=f(h)^{p-1}\langle N,\partial_{t}\rangle\left(nH+(p-2)\mid\nabla h\mid^{-2}\langle A(\nabla h),\nabla h\rangle\right)\mid\nabla h\mid^{p-2}\\
&\quad+(n+p-2)\mathcal{H}(h)f(h)^{p-1}\mid\nabla h\mid^{p-2},
\end{split}
\end{equation}
for all $p\in(1,+\infty)$. 
\end{proof}

From now on, we will deal with hypersurface contained in a {\em slab} of $I\times_{f}M^{n}$ which means that $\Sigma^{n}$ lies between two leaves $M^{n}_{t_1}$, $M^{n}_{t_2}$ with $t_1<t_2$ of the foliation $M^{n}_{t}$; in other words, $\Sigma^{n}$ is contained in a bounded region of the type
\begin{equation}\label{slab}
[t_{1},t_{2}]\times M^{n}=\{(t,q)\in I\times_{f}M^{n};t_{1}\leq t\leq t_{2}\mbox{and} q\in M^{n}\}.
\end{equation}
It is clear that if the $\Sigma^{n}$ is contained in the region~\eqref{slab}, the height function satisfies: $t_{1}\leq h(q)\leq t_{2}$ for all $q\in\Sigma^{n}$. Using this notation, Alías and Dajczer~\cite{Alias:06} investigated complete surfaces properly immersed in a slab of $I\times_{f}M^{n}$ under suitable geometric assumptions on the Riemannian fiber $M^{n}$. Our first result generalizes this one without any assumption on $M^{n}$.
\begin{theorem}\label{teo:4.2}
There exists no complete non-compact immersed hypersurface $\Sigma^{n}$ contained in a slab of $I\times_{f}M^{n}$ having $\lambda_{1,2}(\Sigma)=0$ and mean curvature satisfying 
\begin{equation}\label{hip1}
\sup_{\Sigma}\mid H\mid<\min_{[t_{1},t_{2}]}\mathcal{H}(t).
\end{equation}
\end{theorem}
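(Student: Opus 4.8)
The plan is to argue by contradiction, producing from such a hypersurface a bounded strongly $2$-subharmonic function and invoking Theorem~\ref{teo:1.1}. Suppose, to the contrary, that there is a complete non-compact immersed hypersurface $\Sigma^{n}$ contained in the slab $[t_{1},t_{2}]\times M^{n}$, with $\lambda_{1,2}(\Sigma)=0$ and satisfying~\eqref{hip1}. The natural test function is $\sigma(h)=\int_{t_{0}}^{h}f(u)\,du$ from Lemma~\ref{lem:4.1}. Since $f>0$, the map $\sigma$ is increasing, so the slab condition $t_{1}\leq h\leq t_{2}$ forces $\sigma(t_{1})\leq\sigma(h)\leq\sigma(t_{2})$; thus $\sigma(h)$ is bounded, and it is smooth because $h$, $f$, and the primitive are.

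First I would specialize~\eqref{eq:4.16.5} to $p=2$. Here one must note that the $2$-Laplacian is the ordinary Laplacian, so no singularity arises where $\nabla h=0$ and the identity holds on all of $\Sigma^{n}$ (for general $p$ the factor $|\nabla h|^{p-2}$ confines the formula to $\{\nabla h\neq0\}$, which is precisely why the statement is restricted to $p=2$). Setting $p=2$ kills the coefficient $(p-2)$ and makes $|\nabla h|^{p-2}=1$, collapsing~\eqref{eq:4.16.5} to
\begin{equation*}
\Delta_{2}\sigma(h)=nf(h)\bigl(\langle N,\partial_{t}\rangle H+\mathcal{H}(h)\bigr).
\end{equation*}

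Next I would extract a uniform positive lower bound. From~\eqref{eq:4.11} one has $|\langle N,\partial_{t}\rangle|\leq1$, hence $\langle N,\partial_{t}\rangle H\geq-|H|\geq-\sup_{\Sigma}|H|$, while $t_{1}\leq h\leq t_{2}$ gives $\mathcal{H}(h)\geq\min_{[t_{1},t_{2}]}\mathcal{H}(t)$. Writing $\delta=\min_{[t_{1},t_{2}]}\mathcal{H}(t)-\sup_{\Sigma}|H|$, hypothesis~\eqref{hip1} yields $\delta>0$, so $\langle N,\partial_{t}\rangle H+\mathcal{H}(h)\geq\delta$. Since $f$ is positive and continuous on the compact interval, $f(h)\geq\min_{[t_{1},t_{2}]}f>0$, whence
\begin{equation*}
\Delta_{2}\sigma(h)\geq n\Bigl(\min_{[t_{1},t_{2}]}f\Bigr)\delta=:k>0.
\end{equation*}
Thus $\sigma(h)$ is a bounded strongly $2$-subharmonic function on a complete non-compact manifold with $\lambda_{1,2}(\Sigma)=0$, contradicting Theorem~\ref{teo:1.1}.

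The main obstacle is passing from the pointwise positivity of $\langle N,\partial_{t}\rangle H+\mathcal{H}(h)$, which is immediate from~\eqref{hip1}, to a single uniform constant $k>0$ as demanded by the definition~\eqref{eq:1.6} of strong subharmonicity; this is where the compactness of $[t_{1},t_{2}]$ enters, through the continuity and strict positivity of both $\mathcal{H}$ and $f$ there. The only remaining care is to confirm that the $p=2$ specialization of Lemma~\ref{lem:4.1} is valid globally, so that $\sigma(h)$ is an admissible test function in Theorem~\ref{teo:1.1}.
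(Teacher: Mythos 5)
Your proof is correct and follows essentially the same route as the paper: specialize Lemma~\ref{lem:4.1} to $p=2$, bound $\langle N,\partial_{t}\rangle H+\mathcal{H}(h)$ below by $\min_{[t_{1},t_{2}]}\mathcal{H}-\sup_{\Sigma}\mid H\mid>0$ via Cauchy--Schwarz and the slab condition, and conclude that $\sigma(h)$ is a bounded strongly $2$-subharmonic function, contradicting Theorem~\ref{teo:1.1}. Your two added precisions --- that the $p=2$ identity holds globally (no restriction to $\{\nabla h\neq 0\}$) and that $f(h)\geq\min_{[t_{1},t_{2}]}f>0$ by compactness, yielding a genuinely uniform constant $k$ --- are points the paper glosses over, and they tighten rather than change the argument.
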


\begin{proof}
Suppose there exists a complete non-compact immersed hypersurface $\Sigma^{n}$ into the slab of $I\times_{f}M^{n}$, such that $\lambda_{1,2}(\Sigma)=0$ and that the mean curvature satisfy~\eqref{hip1}. By taking $p=2$ in~\eqref{eq:4.16.5},
\begin{equation}\label{eq:4.16.6}
\Delta_{2}\sigma(h)=nf(h)\left(\langle N,\partial_{t}\rangle H+\mathcal{H}(h)\right).
\end{equation}
From Cauchy-Schwarz' inequality and~\eqref{hip1},
\begin{equation}\label{eq:4.16.6.1}
\langle N,\partial_{t}\rangle H+\mathcal{H}(h)\geq-\sup_{\Sigma}\mid H\mid+\inf_{\Sigma}\mathcal{H}(h)>0.
\end{equation}
Hence
\begin{equation}\label{eq:4.16.7}
\Delta_{2}\sigma(h)\geq n\inf_{\Sigma}f(h)\left(-\sup_{\Sigma}\mid H\mid+\inf_{\Sigma}\mathcal{H}(h)\right)>0.
\end{equation}
Thus, $\sigma(h)$ is a strongly $2$-subharmonic function on $\Sigma^{n}$. Therefore, from Theorem~\ref{teo:1.1}, $\sigma(h)$ must be unbounded, which contradicts the fact the $\Sigma^{n}$ is contained into a slab.
\end{proof}

As application of Corollary~\ref{cor:3.4}, we get the following consequence of Theorem~\ref{teo:4.2}:
\begin{corollary}\label{cor:4.3}
There exists no complete non-compact immersed hypersurface $\Sigma^{n}$ contained in a slab of $I\times_{f}M^{n}$ with polynomial volume growth and mean curvature satisfying 
\begin{equation}
\sup_{\Sigma}\mid H\mid<\min_{[t_{1},t_{2}]}\mathcal{H}(t).
\end{equation}
\end{corollary}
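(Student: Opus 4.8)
The plan is to deduce this directly from Theorem~\ref{teo:4.2} together with Corollary~\ref{cor:3.4}, since the only difference between the two statements is that the spectral hypothesis $\lambda_{1,2}(\Sigma)=0$ has been replaced by the geometric hypothesis of polynomial volume growth. The most economical route is to note that polynomial volume growth already forces the vanishing of the first eigenvalue: taking $p=2$ in Example~\ref{ex:3.3}, any complete non-compact Riemannian manifold with polynomial volume growth satisfies $\lambda_{1,2}(\Sigma)=0$. Hence the hypotheses of Theorem~\ref{teo:4.2} are met verbatim, and the nonexistence conclusion follows immediately.

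Alternatively, and more in the spirit of invoking Corollary~\ref{cor:3.4}, I would argue by contradiction and reproduce the computation from the proof of Theorem~\ref{teo:4.2}. Assuming such a hypersurface $\Sigma^{n}$ exists, I would set $p=2$ in~\eqref{eq:4.16.5} to obtain
\begin{equation}
\Delta_{2}\sigma(h)=nf(h)\left(\langle N,\partial_{t}\rangle H+\mathcal{H}(h)\right).
\end{equation}
Using the Cauchy--Schwarz inequality together with the curvature assumption $\sup_{\Sigma}\mid H\mid<\min_{[t_{1},t_{2}]}\mathcal{H}(t)$, the factor in parentheses is bounded below by the positive constant $-\sup_{\Sigma}\mid H\mid+\inf_{\Sigma}\mathcal{H}(h)$, so that $\Delta_{2}\sigma(h)\geq n\inf_{\Sigma}f(h)\left(-\sup_{\Sigma}\mid H\mid+\inf_{\Sigma}\mathcal{H}(h)\right)>0$. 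Therefore $\sigma(h)$ is a strongly $2$-subharmonic function on $\Sigma^{n}$.

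At this point I would apply Corollary~\ref{cor:3.4}: since $\Sigma^{n}$ is complete, non-compact and has polynomial volume growth, every strongly $2$-subharmonic function on $\Sigma^{n}$ is unbounded; in particular $\sigma(h)$ is unbounded. On the other hand, the slab containment forces $t_{1}\leq h\leq t_{2}$, so $\sigma(h)=\int_{t_{0}}^{h(\cdot)}f(u)\,du$ is bounded, which is the desired contradiction.

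I do not expect a genuine obstacle here, as the argument is a routine transplant of the proof of Theorem~\ref{teo:4.2} with the spectral hypothesis supplied geometrically. The only points requiring a word of care are that $\inf_{\Sigma}f(h)>0$ and $\inf_{\Sigma}\mathcal{H}(h)\geq\min_{[t_{1},t_{2}]}\mathcal{H}(t)$, both of which hold because $h$ takes values in the compact interval $[t_{1},t_{2}]$ and $f$ is a positive smooth function there; this is exactly what makes the lower bound on $\Delta_{2}\sigma(h)$ both positive and uniform, as required for strong $2$-subharmonicity.
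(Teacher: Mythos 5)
Your proposal is correct and matches the paper's intended argument exactly: the paper derives Corollary~\ref{cor:4.3} precisely as an application of Corollary~\ref{cor:3.4} (equivalently, via Example~\ref{ex:3.3} supplying $\lambda_{1,2}(\Sigma)=0$ so that Theorem~\ref{teo:4.2} applies verbatim), which are the two routes you describe. Both your reduction and your self-contained repetition of the $\Delta_{2}\sigma(h)$ computation, including the boundedness of $\sigma(h)$ from the slab condition, are sound.
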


Now, by assuming that the warped product manifold $I\times_{f}M^{n}$ satisfies $\mathcal{H}(t)\geq1$ for all $t\in I$, we obtain
\begin{theorem}\label{teo:4.4}
There exists no complete non-compact immersed hypersurface $\Sigma^{n}$ contained in a slab of $I\times_{f}M^{n}$ having $\lambda_{1,2}(\Sigma)=0$ and mean curvature satisfying $\sup_{\Sigma}H<1$.
\end{theorem}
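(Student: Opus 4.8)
The plan is to mirror the proof of Theorem~\ref{teo:4.2}, reducing the statement to the Liouville property of Theorem~\ref{teo:1.1} applied to the auxiliary function $\sigma(h)$. First I would argue by contradiction: suppose a complete non-compact hypersurface $\Sigma^{n}$ as in the statement exists, so that it lies in a slab $[t_{1},t_{2}]\times M^{n}$, has $\lambda_{1,2}(\Sigma)=0$, and satisfies $\sup_{\Sigma}H<1$, while by hypothesis $\mathcal{H}(t)\geq1$ for all $t\in I$. Setting $p=2$ in the second identity of Lemma~\ref{lem:4.1} collapses the transvection term and yields
\begin{equation}
\Delta_{2}\sigma(h)=nf(h)\bigl(\langle N,\partial_{t}\rangle H+\mathcal{H}(h)\bigr),
\end{equation}
exactly as in~\eqref{eq:4.16.6}.

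The crux is to bound the bracket $\langle N,\partial_{t}\rangle H+\mathcal{H}(h)$ below by a positive constant. Since $\mid\nabla h\mid^{2}=1-\langle N,\partial_{t}\rangle^{2}\geq0$ forces $\mid\langle N,\partial_{t}\rangle\mid\leq1$, I would estimate $\langle N,\partial_{t}\rangle H\geq-\sup_{\Sigma}H$ and combine it with $\mathcal{H}(h)\geq1$ to obtain
\begin{equation}
\langle N,\partial_{t}\rangle H+\mathcal{H}(h)\geq1-\sup_{\Sigma}H>0.
\end{equation}
Because $h$ takes values in the compact interval $[t_{1},t_{2}]$ and $f$ is positive and continuous there, $f(h)\geq\min_{[t_{1},t_{2}]}f>0$; the two previous displays then give a uniform lower bound $\Delta_{2}\sigma(h)\geq k>0$, so $\sigma(h)$ is strongly $2$-subharmonic on $\Sigma^{n}$. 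Invoking Theorem~\ref{teo:1.1}, which applies since $\lambda_{1,2}(\Sigma)=0$, the function $\sigma(h)$ must be unbounded. On the other hand, $\sigma(h)=\int_{t_{0}}^{h}f(u)\,du$ is bounded because $h$ ranges over $[t_{1},t_{2}]$ and $f$ is bounded there; this contradiction rules out the existence of $\Sigma^{n}$.

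I expect the only genuinely delicate point to be the sign control of the bracket. The hypothesis $\sup_{\Sigma}H<1$ is one-sided, whereas the sharp general estimate from $\mid\langle N,\partial_{t}\rangle\mid\leq1$ is only $\langle N,\partial_{t}\rangle H\geq-\mid H\mid$; at points where $H$ is very negative the bracket could in principle fail to stay positive. The step $\langle N,\partial_{t}\rangle H\geq-\sup_{\Sigma}H$ therefore relies on fixing the orientation $N$ for which $H\leq\sup_{\Sigma}H<1$ is the controlling bound, in the same spirit as the use of $\sup_{\Sigma}\mid H\mid$ in Theorem~\ref{teo:4.2}. Once this orientation convention is in force, the remaining ingredients---specializing Lemma~\ref{lem:4.1} to $p=2$, extracting the uniform positive lower bound from compactness of the slab, and quoting Theorem~\ref{teo:1.1}---are routine, and the conclusion follows precisely as in Theorem~\ref{teo:4.2}.
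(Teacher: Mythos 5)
Your overall strategy is sound and genuinely different from the paper's. You reduce Theorem~\ref{teo:4.4} to the mechanism of Theorem~\ref{teo:4.2}: take $p=2$ in Lemma~\ref{lem:4.1} to get $\Delta_{2}\sigma(h)=nf(h)\left(\langle N,\partial_{t}\rangle H+\mathcal{H}(h)\right)$, bound the bracket below by a positive constant, and apply Theorem~\ref{teo:1.1} to the bounded function $\sigma(h)$. The paper instead works with the height function $h$ itself: starting from~\eqref{eq:4.14}, it uses $\mathcal{H}(h)\geq1$ and $n-\mid\nabla h\mid^{2}>0$, then an $\varepsilon$-Young inequality (absorbing $\langle N,\partial_{t}\rangle^{2}=1-\mid\nabla h\mid^{2}$) to reach $\Delta_{2}h\geq\frac{n}{2}\left(1+\frac{n-2}{n}\mid\nabla h\mid^{2}-H^{2}\right)\geq\frac{n}{2}\left(1-\sup_{\Sigma}H^{2}\right)>0$, which is~\eqref{eq:4.19}, and finally applies Theorem~\ref{teo:1.1} to $h$. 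Your route is shorter and, once the curvature bound is two-sided, exhibits Theorem~\ref{teo:4.4} as essentially a corollary of Theorem~\ref{teo:4.2}, since $\sup_{\Sigma}\mid H\mid<1\leq\min_{[t_{1},t_{2}]}\mathcal{H}(t)$; the paper's route has the side benefit that the intermediate inequality~\eqref{eq:4.19} is reused verbatim in the proof of Theorem~\ref{teo:4.6}.

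The genuine gap is in your sign control of the bracket, and the fix you propose does not work. Since $\mid\langle N,\partial_{t}\rangle\mid\leq1$, the sharp pointwise estimate is $\langle N,\partial_{t}\rangle H\geq-\mid H\mid$, and $-\mid H\mid\geq-\sup_{\Sigma}H$ fails at any point where $H<-\sup_{\Sigma}H$, i.e.\ where $H$ is sufficiently negative. Fixing an orientation cannot rescue the step $\langle N,\partial_{t}\rangle H\geq-\sup_{\Sigma}H$: replacing $N$ by $-N$ replaces $A$ by $-A$ and hence $H$ by $-H$, so the product $\langle N,\partial_{t}\rangle H$ is orientation-invariant, and no choice of unit normal controls it when $\mid H\mid$ is large. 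What your argument truly needs is the two-sided bound $\sup_{\Sigma}\mid H\mid<1$. You should know, however, that the paper's own proof has exactly the same defect: its final step $\frac{n}{2}\left(1-\sup_{\Sigma}H^{2}\right)>0$ requires $\sup_{\Sigma}H^{2}<1$, which likewise does not follow from the literal one-sided hypothesis $\sup_{\Sigma}H<1$ (compare the explicitly two-sided hypothesis $H^{2}\leq1$ in Theorem~\ref{teo:4.6}). So under the evidently intended reading $\sup_{\Sigma}\mid H\mid<1$, your proof is correct and parallel to, though distinct from, the paper's; under the literal reading, both proofs break at the same conceptual point, and your orientation argument is not a valid repair.
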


\begin{proof}
Let us suppose, by contradiction, the existence of a such hypersurface. From~\eqref{eq:4.11} we can easily see that $n-\mid\nabla h\mid^{2}>0$. Then, \eqref{eq:4.14} can be write as follows,
\begin{equation}\label{eq:4.17}
\Delta_{2}h\geq n\langle N,\partial_{t}\rangle H+n-\mid\nabla h\mid^{2},
\end{equation}
provided that $\mathcal{H}\geq1$.

By using $\varepsilon$-Young's inequality $2ab\leq\varepsilon a^{2}+\varepsilon^{-1}b^{2}$, $\varepsilon>0$, for
\begin{equation*}
a=\sqrt{n}\mid\langle N,\partial_{t}\rangle\mid\quad\mbox{and}\quad b=\sqrt{n}\mid H\mid
\end{equation*}
we have
\begin{equation}\label{eq:4.18}
\begin{split}
\Delta_{2}h&\geq-n\mid\langle N,\partial_{t}\rangle\mid\mid H\mid+n-\mid\nabla h\mid^{2}\\
&\geq\dfrac{n}{2}\left(2-\varepsilon+\left(\varepsilon-\dfrac{2}{n}\right)\mid\nabla h\mid^{2}-\dfrac{1}{\varepsilon}H^{2}\right).
\end{split}
\end{equation}
By considering $\varepsilon=1$, we get
\begin{equation}\label{eq:4.19}
\Delta_{2}h\geq\dfrac{n}{2}\left(1+\dfrac{n-2}{n}\mid\nabla h\mid^{2}-H^{2}\right)\geq\dfrac{n}{2}\left(1-\sup_{\Sigma}H^{2}\right)>0.
\end{equation}
Which means that $h$ is strongly $2$-subharmonic. Therefore, we are in position to apply Theorem~\ref{teo:1.1} to infer that $h$ is unbounded, a contradiction.
\end{proof}

\begin{corollary}\label{cor:4.5}
There exists no complete non-compact immersed hypersurface $\Sigma^{n}$ contained in a slab of $I\times_{f}M^{n}$ having polynomial volume growth and mean curvature satisfying $\sup_{\Sigma}H<1$.
\end{corollary}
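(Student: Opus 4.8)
The plan is to treat Corollary~\ref{cor:4.5} exactly as Corollary~\ref{cor:4.3} was obtained from Theorem~\ref{teo:4.2}: it is the polynomial-volume-growth counterpart of Theorem~\ref{teo:4.4}, and the bridge between the two hypotheses is supplied by Example~\ref{ex:3.3}, which guarantees that a complete non-compact manifold with polynomial volume growth has $\lambda_{1,p}(\Sigma)=0$ for every $p\in(1,\infty)$, in particular $\lambda_{1,2}(\Sigma)=0$. Thus the cleanest route is simply to observe that the volume-growth hypothesis forces the spectral hypothesis of Theorem~\ref{teo:4.4} and then invoke that theorem verbatim.

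Concretely, I would argue by contradiction. Suppose such a hypersurface $\Sigma^{n}$ exists. Since $\Sigma^{n}$ is complete, non-compact and has polynomial volume growth, Example~\ref{ex:3.3} (with $p=2$) yields $\lambda_{1,2}(\Sigma)=0$. Hence $\Sigma^{n}$ satisfies all the hypotheses of Theorem~\ref{teo:4.4}, whose conclusion is that no such hypersurface can exist, a contradiction. Alternatively, and entirely equivalently, one may re-run the proof of Theorem~\ref{teo:4.4}: using $\mathcal{H}\geq1$ together with $\mid\nabla h\mid^{2}\leq1<n$ (which follows from $\mid\nabla h\mid^{2}=1-\langle N,\partial_{t}\rangle^{2}$ and $n\geq2$), the $\varepsilon$-Young inequality with $\varepsilon=1$ gives
\begin{equation}
\Delta_{2}h\geq\dfrac{n}{2}\left(1-\sup_{\Sigma}H^{2}\right)>0,
\end{equation}
so that $h$ is strongly $2$-subharmonic; one then applies Corollary~\ref{cor:3.4} in place of Theorem~\ref{teo:1.1} to deduce that $h$ is unbounded, contradicting the fact that $\Sigma^{n}$ lies in a slab, where $t_{1}\leq h\leq t_{2}$.

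I do not expect any genuine obstacle here: once Theorem~\ref{teo:4.4} and the volume-growth criterion of Example~\ref{ex:3.3} are in hand, the statement is immediate, and the only substantive content --- the strong $2$-subharmonicity estimate for the height function $h$ --- has already been established in the proof of Theorem~\ref{teo:4.4}. The single point requiring a moment's care is the sign condition $n-\mid\nabla h\mid^{2}>0$ used to pass from the Hessian trace formula to the lower bound on $\Delta_{2}h$; this is harmless since $\mid\nabla h\mid^{2}\leq1$ and $n\geq2$, so that $n-\mid\nabla h\mid^{2}\geq n-1\geq1$.
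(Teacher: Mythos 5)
Your proposal is correct and matches the paper's intended argument exactly: the paper states Corollary~\ref{cor:4.5} without proof precisely because it follows from Theorem~\ref{teo:4.4} once Example~\ref{ex:3.3} (polynomial volume growth $\Rightarrow\lambda_{1,2}(\Sigma)=0$) is invoked, which is your first route, and your alternative route (re-running the strong $2$-subharmonicity estimate for $h$ and citing Corollary~\ref{cor:3.4} instead of Theorem~\ref{teo:1.1}) is the same mechanism by which the paper derived Corollary~\ref{cor:4.3} from Theorem~\ref{teo:4.2}.
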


Following~\cite{Tashiro:65}, when the warped function $f$ is the exponential, the product warped manifold $\mathbb{R}\times_{e^{t}}M^{n}$ belongs to a class of manifolds known as {\em pseudo-hyperbolic space}. In fact, this terminology comes from the observation that the hyperbolic space, $\mathbb{H}^{n+1}$, can be described as a warped product, $\mathbb{R}\times_{e^{t}}\mathbb{R}^{n}$. In this structure, the slices represent the set of all horospheres that share the same fixed point on the asymptotic boundary, $\partial\mathbb{H}^{n+1}$. Together, these horospheres provide a complete foliation of $\mathbb{H}^{n+1}$ (see also~\cite{Alias:06,Montiel:99}). In this setting, we have the following extension of~\cite[Corollary 3]{Alias:06}.
\begin{corollary}\label{cor:4.6}
There exists no complete non-compact immersed hypersurface $\Sigma^{n}$ contained in a slab of pseudo-hyperbolic manifold $\mathbb{R}\times_{e^{t}}M^{n}$ having $\lambda_{1,2}(\Sigma)=0$ and mean curvature satisfying $\sup_{\Sigma}H<1$.
\end{corollary}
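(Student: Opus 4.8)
The plan is to realize Corollary~\ref{cor:4.6} as the special case of Theorem~\ref{teo:4.4} obtained by taking the warping function to be the exponential. First I would recall that the pseudo-hyperbolic manifold is by definition $\mathbb{R}\times_{e^{t}}M^{n}$, so that here $I=\mathbb{R}$ and the warping function is $f(t)=e^{t}$. The only computation required is that of the slice mean curvature, which by definition equals $\mathcal{H}(t)=(\ln f)'(t)=(\ln e^{t})'=1$ for every $t\in\mathbb{R}$. In particular, $\mathcal{H}(t)\geq1$ holds on all of $I$, which is precisely the standing hypothesis under which Theorem~\ref{teo:4.4} is formulated.

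With this identification in hand, I would simply invoke Theorem~\ref{teo:4.4}. Suppose, for contradiction, that a complete non-compact immersed hypersurface $\Sigma^{n}$ were contained in a slab of $\mathbb{R}\times_{e^{t}}M^{n}$ with $\lambda_{1,2}(\Sigma)=0$ and $\sup_{\Sigma}H<1$. Since the ambient space satisfies $\mathcal{H}\equiv1\geq1$, Theorem~\ref{teo:4.4} applies verbatim and rules out the existence of such a $\Sigma^{n}$, which is exactly the assertion to be proved.

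The proof carries essentially no obstacle beyond confirming that the hypotheses line up. The one point deserving a word is that the constant value $\mathcal{H}\equiv1$ satisfies the non-strict inequality $\mathcal{H}(t)\geq1$ demanded by Theorem~\ref{teo:4.4}, so that the boundary case of the hypothesis is included; the mean-curvature bound $\sup_{\Sigma}H<1$ then transfers over unchanged. Both checks are immediate, and the corollary follows at once.
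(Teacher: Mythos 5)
Your proposal is correct and is essentially the paper's own (implicit) argument: Corollary~\ref{cor:4.6} is stated as the specialization of Theorem~\ref{teo:4.4} to the pseudo-hyperbolic manifold, where $f(t)=e^{t}$ gives $\mathcal{H}(t)=(\ln e^{t})'=1\geq 1$ for all $t\in\mathbb{R}$, so the standing hypothesis of that theorem holds and the conclusion follows verbatim.
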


For the next result, we will deal with {\em helix-type} hypersurfaces or having {\em constant angle}. We say that a hypersurface $\Sigma^{n}$ of $I\times_{f}M^{n}$ is a helix-type hypersurface if the angle function $\langle N,\partial_{t}\rangle$ is constant on $\Sigma^{n}$. In this setting, by considering that the warped product manifold $I\times_{f}M^{n}$ satisfies $\mathcal{H}(t)\geq1$ for all $t\in I$, we obtain.
\begin{theorem}\label{teo:4.6}
Let $\Sigma^{n}$ be a complete non-compact helix-type hypersurface contained in a slab of $I\times_{f}M^{n}$ with $\lambda_{1,p}(\Sigma)=0$, $p>2$. If the mean curvature (not necessary constant) satisfies $H^{2}\leq1$, then $\Sigma^{n}$ is a slice.
\end{theorem}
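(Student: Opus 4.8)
The plan is to exploit the helix condition to collapse everything into a dichotomy governed by the (constant) angle function, and then feed a strongly $p$-subharmonic function into Theorem~\ref{teo:1.1}. Write $c=\langle N,\partial_{t}\rangle$, which by hypothesis is constant on $\Sigma^{n}$. From~\eqref{eq:4.11} we obtain $|\nabla h|^{2}=1-c^{2}$, so $|\nabla h|$ is constant as well. This immediately splits the argument: either $c^{2}=1$, in which case $\nabla h\equiv0$, the height function is constant, and $\Sigma^{n}$ is a slice (and we are done); or $c^{2}<1$, in which case $\nabla h\neq0$ \emph{everywhere}, so that Lemma~\ref{lem:4.1} applies globally on $\Sigma^{n}$. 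The goal is then to show that the second case cannot occur.

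In the case $c^{2}<1$, I would first extract the shape-operator information hidden in the helix condition. Since $\langle N,\partial_{t}\rangle$ is constant, the left-hand side of~\eqref{eq:4.13} vanishes for every $X$, which forces $A(\nabla h)=-c\,\mathcal{H}(h)\nabla h$; in particular $\langle A(\nabla h),\nabla h\rangle=-c\,\mathcal{H}(h)|\nabla h|^{2}$. Substituting this together with $|\nabla h|^{2}=1-c^{2}$ into the expression~\eqref{eq:4.16} for $\Delta_{p}h$, and collecting the coefficients of $\mathcal{H}(h)$ — the contributions $-(p-2)c^{2}$ and $n-1+(p-1)c^{2}$ combine to $n-1+c^{2}$ — the entire $p$-dependence should concentrate in the prefactor, leaving
\begin{equation}
\Delta_{p}h=|\nabla h|^{p-2}\big(cnH+\mathcal{H}(h)(n-1+c^{2})\big).
\end{equation}

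The decisive step is a positivity estimate for the bracket. Using $\mathcal{H}(t)\geq1$, the hypothesis $H^{2}\leq1$ (so that $|H|\leq1$), and $n\geq2$, I would bound
\begin{equation}
cnH+\mathcal{H}(h)(n-1+c^{2})\geq -n|c|+(n-1+c^{2})=(1-|c|)\big(n-1-|c|\big)>0,
\end{equation}
where strict positivity on $\{|c|<1\}$ holds because both factors are positive once $n\geq2$. Since $|\nabla h|^{p-2}=(1-c^{2})^{(p-2)/2}$ is a positive constant in this regime, we conclude $\Delta_{p}h\geq k$ for some constant $k>0$; that is, $h$ is strongly $p$-subharmonic. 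But $\Sigma^{n}$ lies in a slab, so $h$ is bounded, while $\lambda_{1,p}(\Sigma)=0$. This contradicts Theorem~\ref{teo:1.1}, so $c^{2}<1$ is impossible, $c^{2}=1$, and $\Sigma^{n}$ is a slice.

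The main obstacle I anticipate is purely computational: simplifying~\eqref{eq:4.16} after the substitution $A(\nabla h)=-c\,\mathcal{H}(h)\nabla h$ so that the $(p-2)$-term and the $(p-1)|\nabla h|^{2}$-term collapse cleanly into the coefficient $n-1+c^{2}$. Once this identity is secured, the factorization $(1-|c|)(n-1-|c|)$ renders the sign transparent and the appeal to Theorem~\ref{teo:1.1} is immediate. It is worth emphasizing that the helix hypothesis is precisely what makes this work for $p>2$, since in that range the shape-operator term in~\eqref{eq:4.16} does not vanish on its own and must be controlled through the relation $A(\nabla h)=-c\,\mathcal{H}(h)\nabla h$.
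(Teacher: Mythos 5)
Your proof is correct and follows essentially the same route as the paper: the dichotomy on the constant angle $c=\langle N,\partial_{t}\rangle$, the use of~\eqref{eq:4.13} to get $A(\nabla h)=-c\,\mathcal{H}(h)\nabla h$, the resulting simplification of~\eqref{eq:4.16} (your identity $\Delta_{p}h=|\nabla h|^{p-2}\bigl(cnH+\mathcal{H}(h)(n-1+c^{2})\bigr)$ is exactly the paper's~\eqref{eq:4.21}, since $n-|\nabla h|^{2}=n-1+c^{2}$), and the final appeal to Theorem~\ref{teo:1.1}. The one genuine difference is the positivity estimate, and there yours is actually sharper: the paper bounds $\Delta_{2}h$ through the $\varepsilon$-Young inequality~\eqref{eq:4.19} and concludes $\Delta_{p}h\geq\tfrac{n-2}{2}|\nabla h|^{p}$, which is a \emph{strictly} positive constant only when $n\geq3$ (at $n=2$ the paper's lower bound degenerates to $0$ and strong $p$-subharmonicity no longer follows), whereas your direct estimate $cnH+\mathcal{H}(h)(n-1+c^{2})\geq(1-|c|)(n-1-|c|)>0$ uses $|H|\leq1$ without the lossy Young step and remains strictly positive for every $n\geq2$, i.e., in the full generality in which the ambient warped product is set up in Section~\ref{sec:applications}. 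So your argument not only reproduces the paper's proof but quietly repairs its low-dimensional case.
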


\begin{proof}
We will assume for contradiction, that $\Sigma^{n}$ is not a slice of $I \times_{f}M^{n}$ Since $\Sigma^{n}$ is a helix-type hypersurface which is not a slice, we must have $\langle N,\partial_{t}\rangle\neq\pm1$, and hence $\mid\nabla h\mid=const.>0$. From~\eqref{eq:4.11} and~\eqref{eq:4.13}, we have
\begin{equation}\label{eq:4.20}
\mid\nabla h\mid^{-2}\langle N,\partial_{t}\rangle\langle A(\nabla h),\nabla h\rangle=-\mathcal{H}(h)\langle N,\partial_{t}\rangle^{2}=-\mathcal{H}(h)(1-\mid\nabla h\mid^{2}).
\end{equation}
By replacing this in~\eqref{eq:4.16},
\begin{equation}
\Delta_{p}h=\left(n\langle N,\partial_{t}\rangle H+\mathcal{H}(h)(n-\mid\nabla h\mid^{2})\right)\mid\nabla h\mid^{p-2},
\end{equation}
that is,
\begin{equation}\label{eq:4.21}
\Delta_{p}h=\mid\nabla h\mid^{p-2}\Delta_{2}h.
\end{equation}
Hence, from~\eqref{eq:4.19},
\begin{equation}
\Delta_{p}h\geq\left(\dfrac{n-2}{2}\right)\mid\nabla h\mid^{p}=const.>0.
\end{equation}
Therefore, $h$ is a strongly $p$-subharmonic function. Since $\lambda_{1,p}(\Sigma)=0$, it follows from Theorem~\ref{teo:1.1} that $h$ is unbounded, a contradiction.
\end{proof}

\begin{corollary}\label{cor:4.7}
Let $\Sigma^{n}$ be a complete non-compact helix-type hypersurface contained in a slab of $I\times_{f}M^{n}$ with polynomial volume growth. If the mean curvature (not necessary constant) satisfies $H^{2}\leq1$, then $\Sigma^{n}$ is a slice.
\end{corollary}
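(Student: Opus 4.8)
The plan is to deduce this directly from Theorem~\ref{teo:4.6}, whose hypotheses differ from the present ones only in that Theorem~\ref{teo:4.6} requires the vanishing of the first eigenvalue $\lambda_{1,p}(\Sigma)=0$ for some $p>2$, whereas here we are instead given polynomial volume growth. Thus the whole task reduces to extracting such a $p$ from the growth assumption; everything else in the hypothesis list (complete non-compact helix-type hypersurface, containment in a slab, the ambient condition $\mathcal{H}(t)\geq 1$ inherited from the setup preceding Theorem~\ref{teo:4.6}, and $H^{2}\leq 1$) is identical.

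First I would invoke Example~\ref{ex:3.3}: since $\Sigma^{n}$ is a complete non-compact Riemannian manifold with polynomial volume growth, it follows that $\lambda_{1,p}(\Sigma)=0$. The point to check is that this implication is available at a value $p>2$; indeed the result of Batista et al.\ quoted in Example~\ref{ex:3.3} is valid for all $p>2$, so one may fix any such $p$ and conclude $\lambda_{1,p}(\Sigma)=0$. With this, all the hypotheses of Theorem~\ref{teo:4.6} are in force for that $p$, and applying it forces $\Sigma^{n}$ to be a slice, which is precisely the desired conclusion. This mirrors exactly how Corollaries~\ref{cor:4.3} and~\ref{cor:4.5} were obtained from Theorems~\ref{teo:4.2} and~\ref{teo:4.4}.

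I do not expect any genuine obstacle here: all the real analytic content — the computation in Lemma~\ref{lem:4.1} showing $\Delta_{p}h$ is a positive constant for a helix-type hypersurface that is not a slice, hence that $h$ is strongly $p$-subharmonic, together with the Liouville-type conclusion of Theorem~\ref{teo:1.1} that such an $h$ must be unbounded and thereby contradict the slab containment — has already been absorbed into the proof of Theorem~\ref{teo:4.6}. The only mild point requiring attention is simply that the polynomial-growth-to-zero-eigenvalue statement be applied at an admissible $p>2$, after which the argument is a short citation chain rather than a new computation.
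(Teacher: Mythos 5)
Your proposal is correct and coincides with the paper's intended argument: Corollary~\ref{cor:4.7} is obtained exactly by invoking Example~\ref{ex:3.3} (valid for $p>2$ by the result of Batista et al.) to get $\lambda_{1,p}(\Sigma)=0$ for some fixed $p>2$, and then applying Theorem~\ref{teo:4.6}, just as Corollaries~\ref{cor:4.3} and~\ref{cor:4.5} follow from Theorems~\ref{teo:4.2} and~\ref{teo:4.4}. You also correctly noted the only point needing care, namely that the eigenvalue vanishing must be applied at an admissible exponent $p>2$ and that the ambient hypothesis $\mathcal{H}(t)\geq 1$ is inherited from the setup preceding Theorem~\ref{teo:4.6}.
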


In the inspirit of Corollary~\ref{cor:4.5}, we close this manuscript with the following extension of~\cite[Theorem 4]{Alias:06} and~\cite[Theorem 3.3]{de Lima:14}.
\begin{corollary}\label{cor:4.8}
Let $\Sigma^{n}$ be a complete non-compact helix-type hypersurface contained in a slab of pseudo-hyperbolic manifold $\mathbb{R}\times_{e^{t}}M^{n}$ with $\lambda_{1,p}(\Sigma)=0$. If the mean curvature (not necessary constant) satisfies $H^{2}\leq1$, then $\Sigma^{n}$ is a slice.
\end{corollary}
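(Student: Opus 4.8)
The plan is to obtain this statement directly from Theorem~\ref{teo:4.6} by checking that the pseudo-hyperbolic manifold $\mathbb{R}\times_{e^{t}}M^{n}$ satisfies the standing hypothesis $\mathcal{H}(t)\geq1$ imposed there. First I would record that the warping function is $f(t)=e^{t}$, so that the slices have constant mean curvature
\begin{equation*}
\mathcal{H}(t)=(\ln f)'(t)=(\ln e^{t})'=1
\end{equation*}
for every $t\in\mathbb{R}$. In particular $\mathcal{H}(t)\geq1$ holds identically, so $\mathbb{R}\times_{e^{t}}M^{n}$ is a member of the class of warped products to which Theorem~\ref{teo:4.6} applies.

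With this computation in hand, the remaining hypotheses transfer verbatim: $\Sigma^{n}$ is complete, non-compact, of helix-type, contained in a slab, has $\lambda_{1,p}(\Sigma)=0$, and satisfies $H^{2}\leq1$. I would then invoke Theorem~\ref{teo:4.6} to conclude that $\Sigma^{n}$ must be a slice. Concretely, the mechanism is the one already isolated in that proof: for a helix-type hypersurface that is not a slice one has $\mid\nabla h\mid=\mathrm{const.}>0$, and the identity~\eqref{eq:4.21} together with~\eqref{eq:4.19} forces $\Delta_{p}h\geq\frac{n-2}{2}\mid\nabla h\mid^{p}$, a positive constant; thus $h$ is strongly $p$-subharmonic and Theorem~\ref{teo:1.1} renders it unbounded, contradicting the confinement of $\Sigma^{n}$ to the slab.

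I do not anticipate a genuine obstacle here, since the statement is a specialization rather than a new phenomenon; the only point deserving care is the dimensional restriction implicit in the constant $\frac{n-2}{2}$. For $n>2$ this constant is strictly positive and the argument proceeds unchanged, so I would either assume $n\geq3$ or explicitly exclude $n=2$, where the lower bound degenerates to $\Delta_{p}h\geq0$ and the strong $p$-subharmonicity is lost. A second, purely bookkeeping, remark concerns the range of $p$: should one wish to include $p=2$ alongside $p>2$, I would observe that the chain~\eqref{eq:4.21}--\eqref{eq:4.19} remains valid at $p=2$, so the helix-type computation still yields a strongly $2$-subharmonic $h$ and the conclusion again follows from Theorem~\ref{teo:1.1}.
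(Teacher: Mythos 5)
Your proposal coincides with the paper's (implicit) proof: Corollary~\ref{cor:4.8} is obtained precisely by observing that for $f(t)=e^{t}$ one has $\mathcal{H}(t)=(\ln e^{t})'\equiv 1$, so the pseudo-hyperbolic manifold satisfies the standing hypothesis $\mathcal{H}(t)\geq 1$ and Theorem~\ref{teo:4.6} applies verbatim. Your caveat about the dimensional degeneracy at $n=2$ (where the constant $\tfrac{n-2}{2}$ in the chain \eqref{eq:4.21}--\eqref{eq:4.19} vanishes and strong $p$-subharmonicity is lost) is a legitimate point that the paper silently inherits from its proof of Theorem~\ref{teo:4.6}, but it does not alter the fact that your argument is the paper's own.
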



\section*{Acknowledgements}

The first author is partially supported by CNPq, Brazil, grant 311124/2021-6 and Propesqi (UFPE). The second author is partially supported by CNPq, Brazil.


\end{document}